\newtheorem{theorem}{Theorem}[section]
\newtheorem{remark}{Remark}[section]
\newtheorem{example}{Example}[section]
\begin{document}
%%-----------------------------
%%      the top matter
%%-----------------------------
\title{Local discontinuous Galerkin method for the fractional diffusion equation with integral fractional Laplacian}\thanks{This work was supported by the National Natural Science Foundation of China under Grant No. 12071195, and the AI and Big Data Funds under Grant No. 2019620005000775.}% At most 5 thanks
\author{Daxin Nie}\address{School of Mathematics and Statistics, Gansu Key Laboratory of Applied Mathematics and Complex Systems, Lanzhou University, Lanzhou 730000, P.R. China (Email: niedx1993@163.com)}
\author{Weihua Deng}\address{School of Mathematics and Statistics, Gansu Key Laboratory of Applied Mathematics and Complex Systems, Lanzhou University, Lanzhou 730000, P.R. China (Email: dengwh@lzu.edu.cn).}

%
%\date{...}
%
\begin{abstract}
In this paper, we provide a framework of designing the local discontinuous Galerkin scheme for integral fractional Laplacian $(-\Delta)^{s}$ with $s\in(0,1)$ in two dimensions. We theoretically prove and numerically verify the numerical stability and convergence of the scheme with the convergence rate no worse than $\mathcal{O}(h^{k+\frac{1}{2}})$. %can be observed for solving the fractional diffusion equation.
%Finally, numerical results show the effectiveness and accuracy of the scheme.
\end{abstract}
%
%\begin{resume} ... \end{resume}
%
\subjclass{65M60, 42A85, 35R11}
\keywords{Local discontinuous Galerkin method, integral fractional Laplacian, stability analysis, error estimates}
\maketitle
%%-----------------------------
%%      your text
%%-----------------------------
\section{Introduction}

Space fractional diffusion equations govern the probability density function of the position of the L\'evy process with the isotropic power law measure of the jump length \cite{Deng.2018BPftFaTFO}. In this paper, we use local discontinuous Galerkin (LDG) method to solve the  two-dimensional space fractional diffusion equation with the homogeneous Dirichlet boundary condition, i.e.,
\begin{equation}\label{equation2D}
 \left\{
 \begin{aligned}
 &\frac{\partial u(\mathbf{x},t)}{\partial t}+(-\Delta)^{s}u(\mathbf{x},t)=f(\mathbf{x},t)\qquad(\mathbf{x},t)\in \Omega\times(0,T],\\
 &u(\mathbf{x},0)=u_{0}(\mathbf{x})\qquad\qquad\qquad\qquad\qquad\mathbf{x}\in \Omega,\\
 &u(\mathbf{x},t)=0\qquad\qquad\qquad\qquad\qquad\qquad(\mathbf{x},t)\in (\mathbb{R}^2\backslash \Omega)\times (0,T],
 \end{aligned}
 \right.
 \end{equation}
where $\Omega\subset \mathbb{R}^{2}$ is a bounded domain; $f(\mathbf{x},t)$ is the source term; $s\in (0,1)$ and the integral fractional Laplacian is defined by \cite{Acosta.2017AFLERoSaFEA,Deng.2018BPftFaTFO}
\begin{equation}\label{fracdefine}
	(-\Delta)^{s}u(\mathbf{x})=c_{2,s}{\rm P.V. }\int_{\mathbb{R}^{2}}\frac{u(\mathbf{x})-u(\mathbf{y})}{|\mathbf{x}-\mathbf{y}|^{2+2s}}d\mathbf{y}
\end{equation}
with $c_{2,s}=\frac{2^{2s}s\Gamma(s+1)}{\pi\Gamma(1-s)}$.

In recent years, the fractional Laplacian has attracted a lot of attentions in both pure and applied mathematical community. Various numerical methods, such as finite element, finite difference, and spectral method, are proposed to solve a wide variety of equations involving integral fractional Laplacian. For example, \cite{Acosta.2017AFLERoSaFEA,Acosta.2017AsFifa2hDpoafL,Acosta.2017Rtahonmft1fL,Acosta.2019Feaffep,Bonito.2019NaotifL,Nie.2020NaftstfFPswtis,Zhang.2018ARBGMftTFL} use finite element method with piecewise linear polynomial to solve equations involving fractional Laplacian with homogeneous Dirichlet boundary condition; \cite{Acosta.2018FeaotnfDpb} discusses the regularity for fractional Poisson equation with nonhomogenous Dirichlet boundary condition and proposes a mixed finite element scheme. \cite{Duo.2018AnaafdmftfLatfPp,Duo.2019AnmftatdifLwa,Huang.2014NMftFLAFDQA} solve the fractional Poisson equation by  finite difference method and an $\mathcal{O}(h^{2})$ convergence rate is obtained. But to our best knowledge, it seems there are no research works on using discontinuous Galerkin method or finite element with polynomial of degree $k$ $(k>1)$ to discretize the fractional Laplacian. The main challenges come from that the variational formulation $((-\Delta)^{s}u,v)$ with $s\in(\frac{1}{2},1)$ is blow-up when $u,v$ are both discontinuous and it is difficult to  generate $((-\Delta)^{s}u,v)$ when $u$, $v$ are polynomials of degree $k$ $(k>1)$.

LDG method is first proposed in \cite{Cockburn.1998TLDGMfTDCDS} and has been widely used to solve integer order partial differential equations  \cite{Castillo.2001OapeefthvotldGmfcdp,Cockburn.2001SotLDGMfEPoCG,Cockburn.2007AAotMDLDGMfCDP,Dong.2009AoaLDGMfLTDFOP,Liu.2019SoECDGMfLHE,Wang.2020LdGmweintdfsndp,Yeganeh.2017SdsdiatfdeualdGm}. For solving the fractional partial differential equations, \cite{Deng.2013LDGmffde} develops LDG scheme for fractional diffusion equation. Then \cite{Xu.2014DGMfFCDE} uses LDG method for solving fractional convection-diffusion equations and \cite{Qiu.2015NdGmffdeo2dwtm} applies LDG method to solve two-dimensional fractional diffusion problem. But it should be noted that the spatial operator is Riemann-Liouville fractional derivative or Riesz fractional derivative for $s\in(\frac{1}{2},1)$ in these works.
In this paper, we propose a suitable split based on its Fourier transform for the integral fractional Laplacian and build a LDG scheme for Eq. \eqref{equation2D} with $s\in(0,1)$. We discuss the stability and convergence of the scheme and obtain a convergence rate $\mathcal{O}(h^{k+\frac{1}{2}})$, where $k$ denotes the degree of the polynomial.  %To the best of our knowledge,  this is the first work to present the LDG scheme for the equation involving integral fractional Laplacian.

The rest of the paper is organized as follows. In Section 2, we provide some definitions and the equivalent form of Eq. \eqref{equation2D}. In Section 3, we introduce the construction of the LDG scheme for Eq. \eqref{equation2D} in detail. The stability and error analyses are made in Section 4. In Section 5, we verify the effectiveness of our scheme by some numerical examples. In the last section, we conclude the paper with some discussions. Throughout the paper, $C$ denotes the positive constant, which may differ at different occurrences.

\section{Preliminaries}
In this section, we mainly provide the equivalent form of Eq. \eqref{equation2D}.

Taking Fourier transform for the  integral fractional Laplacian \eqref{fracdefine} and decomposing the symbol $|\boldsymbol{\xi}|^{2s}$, we have
\begin{equation*}
	\begin{aligned}
		\mathcal{F}((-\Delta)^{s}u)(\boldsymbol{\xi})=|\boldsymbol{\xi}|^{2s}\mathcal{F}(u)=-(\mathbf{i}\boldsymbol{\xi})\cdot|\boldsymbol{\xi}|^{2s-2}(\mathbf{i}\boldsymbol{\xi})\mathcal{F}(u),
	\end{aligned}
\end{equation*}
where $\mathcal{F}(u)$ means the Fourier transform of $u$ and $\mathbf{i}^{2}=-1$. Then the property of Fourier transform gives
\begin{equation}\label{eqFLeq}
	\begin{aligned}
	(-\Delta)^{s}u=-\nabla\cdot((-\Delta)^{s-1}\nabla u),
	\end{aligned}
\end{equation}
where $\nabla$ stands for the gradient operator; the definition of $(-\Delta)^{s}$ for $s<0$ is \cite{Vazquez.2012NDwFLO}
\begin{equation}\label{equnelap}
	(-\Delta)^{s}u(\mathbf{x})=c_{d,s}\int_{\mathbb{R}^{d}}\frac{u(\mathbf{y})}{|\mathbf{x}-\mathbf{y}|^{d+2s}}d\mathbf{y}
\end{equation}
with $c_{d,s}=-\frac{2^{2s}s\Gamma(s+\frac{d}{2})}{\pi^{d/2}\Gamma(1-s)}$, and $d$ denotes the dimension of the space with $d=2$ in this paper.

\begin{remark}
	When we take $s=\frac{1}{2}$ in one dimension, the formula \eqref{eqFLeq} does not hold since the $c_{1,-\frac{1}{2}}$ (see \eqref{equnelap}) blows up. But in high dimensions, the formula \eqref{eqFLeq} still holds in the case $s=\frac{1}{2}$.
\end{remark}

According to Eq. \eqref{eqFLeq}, we rewrite Eq. \eqref{equation2D} as
%Now, we rewrite Eq. \eqref{equation2D} by Eq. \eqref{eqFLeq}, as follows,
\begin{equation}\label{equationequal}
\left\{
\begin{aligned}
&\frac{\partial u(\mathbf{x},t)}{\partial t}=\nabla\cdot \mathbf{q}+f(\mathbf{x},t)~~\quad\quad\quad\quad\quad(\mathbf{x},t)\in \Omega\times(0,T],\\
&\mathbf{q}=(-\Delta)^{s-1}\mathbf{p}\qquad\quad\quad\quad\qquad\qquad\quad(\mathbf{x},t)\in \Omega\times(0,T],\\
&\mathbf{p}=\nabla u\qquad\qquad\quad\qquad\qquad\qquad\qquad(\mathbf{x},t)\in \Omega\times(0,T],\\
&u(\mathbf{x},0)=u_{0}(\mathbf{x})~\,\quad\quad\quad\quad\quad\qquad\qquad\qquad~\mathbf{x}\in \Omega,\\
&u(\mathbf{x},t)=0\ \,\,\quad\quad\quad\qquad\qquad\qquad\quad\quad(\mathbf{x},t)\in (\mathbb{R}^{2}\backslash\Omega)\times(0,T],\\
\end{aligned}
\right.
\end{equation}
where $\mathbf{q}\in L^{2}(\Omega),~\mathbf{p}\in (H^{1}(\Omega))^{2}$.

%In the following, we can construct our LDG scheme to approximate \eqref{equationequal}.

\section{Construction of the LDG scheme}

In this section, we provide the spatial semi-discrete scheme for  Eq. \eqref{equation2D} by using the LDG method to discretize the fractional Laplacian.

First, we introduce the computational domain $\Omega_{h}$, which is a well approximation of the physical domain $\Omega$. Here we use the triangular meshes in $\Omega_{h}$. Denote the shape-regular triangular element with diameter $h_j$ as $I_{j}$, satisfying
\begin{equation*}
	\Omega_{h}=\cup_{j=1}^{K}I_{j},\quad h=\max_{1\leq j\leq K}h_{j},
\end{equation*}
where $K$ is the number of elements; and $\Gamma$ consists of all of the boundaries of the elements $I_{j}$, $j=1,2,\ldots,K$. Denote $\Gamma_{\mathbb{I}}$ and $\Gamma_{\mathbb{B}}$ as purely internal edges and external edges of the domain boundaries, respectively; i.e., $\Gamma=\Gamma_{\mathbb{I}}\cup \Gamma_{\mathbb{B}}$.% Let $\Gamma_{1,2}$ be a face shared by elements $I_{1}$ and $I_{2}$ and $\mathbf{n}_{1}$ and $\mathbf{n}_{2}$ be the unit normal vectors on $\Gamma_{1,2}$ exterior to $I_{1}$ and $I_{2}$.

Then for two continuous functions $f, g$, we define the inner product on the element $I_{j}$ and  over the face of $I_{j}$ as
\begin{equation*}
(f,g)_{I_{j}}=\int_{I_{j}}f(\mathbf{x})g(\mathbf{x})d\mathbf{x},\ (f,g)_{\partial
I_{j}}=\int_{\partial I_{j}}f(s)g(s)ds.
\end{equation*}
Denote the space of $k$-th order polynomials with $k\geq 1$ in two variables on the element $I_{j}$ as $P_{k}(I_{j})$ whose dimension is $N_{k}$, i.e.,
\begin{equation*}
	P_{k}(I_{j})={\rm span}\{l_{i}(\mathbf{x}),i=1,2,\ldots,N_{k}\},
\end{equation*}
where $l_{i}(\mathbf{x})$ denotes the interpolation  basis function with the interpolating points $\{\mathbf{x}_{i}\}_{i=1}^{N_{k}}$. The discontinuous  finite element space $V_{h,k}$ can be defined by
\begin{equation*}
	V_{h,k}=\{v:\Omega_{h}\rightarrow  \mathbb{R}\big|~~v|_{I_{j}}\in P_{k}(I_{j}),~j=1,\ldots,K\}.
\end{equation*}

According to \eqref{equationequal}, $\{u,\mathbf{p},\mathbf{q}\}$ satisfies the following variational form
\begin{equation}\label{equationvar}
	\left\{
	\begin{aligned}
		&\left (\frac{\partial u}{\partial t},v\right )_{I_{i}}=-( \mathbf{q},\nabla v)_{I_{i}}+(\mathbf{n}\cdot\mathbf{q},v)_{\partial I_{i}}+(f,v)_{I_{i}},\\
		&\left (\mathbf{q},\mathbf{w}\right )_{I_{i}}=\left ((-\Delta)^{s-1}\mathbf{p},\mathbf{w}\right )_{I_{i}},\\
		&\left (\mathbf{p},\mathbf{z}\right )_{I_{i}}=-( u, \nabla\cdot\mathbf{z})_{I_{i}}+(u,\mathbf{n}\cdot\mathbf{z})_{\partial I_{i}},\\
		&u(\mathbf{x},0)=u_{0}(\mathbf{x})\quad\quad\quad\quad\quad\quad\quad\quad\qquad\qquad~~\mathbf{x}\in \Omega,\\
		&u(\mathbf{x},t)=0\quad\quad\quad\quad\quad\quad\quad\qquad\qquad\quad(\mathbf{x},t)\in (\mathbb{R}^2\backslash\Omega)\times[0,T],\\
	\end{aligned}
	\right.
\end{equation}
for all test functions $v\in H^{1}(\Omega)$, $\mathbf{w}\in (L^{2}(\Omega))^{2}$, and $\mathbf{z}\in (H^{1}(\Omega))^{2}$. Here $\mathbf{n}$ denotes the outward normal unit vector of $\partial I_{i}$.  Define $\{u_{h},\mathbf{p}_{h},\mathbf{q}_{h}\}$ as the approximation of $\{u,\mathbf{p},\mathbf{q}\}$. Thus the LDG scheme can be written as: find $\{u_{h},\mathbf{p}_{h},\mathbf{q}_{h}\}\in H^{1}(0,T,V_{h,k})\times(L^{2}(0,T,V_{h,k}))^{2}\times(L^{2}(0,T,V_{h,k}))^{2}$ satisfying
\begin{equation}\label{equationdis}
\left\{
\begin{aligned}
&\left (\frac{\partial u_{h}}{\partial t},v_{h}\right )_{I_{i}}=(\nabla\cdot \mathbf{q}_{h}, v_{h})_{I_{i}}-(\mathbf{n}\cdot(\mathbf{q}_{h}-\hat{\mathbf{q}}_{h}),v_{h})_{\partial I_{i}}+(f,v_{h})_{I_{i}},\\
&\left (\mathbf{q}_{h},\mathbf{w}_{h}\right )_{I_{i}}=\left ((-\Delta)^{s-1}\mathbf{p}_{h},\mathbf{w}_{h}\right )_{I_{i}},\\
&\left (\mathbf{p}_{h},\mathbf{z}_{h}\right )_{I_{i}}=(\nabla u_{h}, \mathbf{z}_{h})_{I_{i}}-(u_{h}-\hat{u}_{h},\mathbf{n}\cdot\mathbf{z}_{h})_{\partial I_{i}},\\
&(u_{h}(\mathbf{x},0),v_{h})_{I_{i}}=(u_{0}(\mathbf{x}),v_{h})_{I_{i}},\\
%&u_{h}(\mathbf{x},t)=0\quad\quad\quad\quad\quad\quad\quad\qquad\qquad\quad(\mathbf{x},t)\in (\mathbb{R}^2\backslash\Omega)\times[0,T],\\
\end{aligned}
\right.
\end{equation}
for all $v_{h}\in V_{h,k}$ and $\mathbf{w}_{h},~\mathbf{z}_{h}\in(V_{h,k})^{2}$, where $\hat{u}_{h}$ and $\hat{\mathbf{q}}_{h}$ are the fluxes determined below.
\begin{remark}
	All the terms in Eq. \eqref{equationdis} can be computed easily except the term $\left ((-\Delta)^{s-1}\mathbf{p}_{h},\mathbf{w}\right )_{I_{i}}$. As for the numerical computation of $\left ((-\Delta)^{s-1}\mathbf{p}_{h},\mathbf{w}\right )_{I_{i}}$, one can refer to \cite{Acosta.2017AsFifa2hDpoafL,Sauter.2011BEM} about the typical integrals appearing in the Boundary Element Method.
\end{remark}
Before specifying the fluxes, let's introduce some notations\cite{Dong.2009AoaLDGMfLTDFOP}. Let $\bar{\Gamma}$ be some fixed face of $I_{j}$. Then for $\bar{\Gamma}\in\partial I_{j}\subset \Gamma$, the average and jump of a scalar function are defined as
\begin{equation*}
	\begin{aligned}
		&\{\!\{u\}\!\}=\frac{u_{ext}+ u_{int}}{2},\quad [u]=\mathbf{n}^{-}u_{ext}+\mathbf{n}^{+} u_{int},\\
	\end{aligned}
\end{equation*}
where `$int$' and `$ext$' mean the interior and exterior information of $I_{j}$ on $\bar{\Gamma}$ and $\mathbf{n}^{+}$ ($\mathbf{n}^{-}$) means the unit normal vector on $\bar{\Gamma}$ pointing exterior (interior) to $I_{j}$; similarly, for the vector function, the average and jump are defined as
\begin{equation*}
	\begin{aligned}
		\{\!\{\mathbf{q}\}\!\}=\frac{\mathbf{q}_{ext}+\mathbf{q}_{int}}{2},\quad [\mathbf{q}]=\mathbf{n}^{-}\cdot\mathbf{q}_{ext}+\mathbf{n}^{+}\cdot \mathbf{q}_{int}.
	\end{aligned}
\end{equation*}
Denote
$u^{\pm}$ and $\mathbf{q}^{\pm}$ as
\begin{equation*}
	u^{\pm}=\{\!\{u\}\!\}\pm\boldsymbol{\beta}\cdot[u],\quad \mathbf{q}^{\pm}=\{\!\{\mathbf{q}\}\!\}\pm\boldsymbol{\beta}\cdot[\mathbf{q}],
\end{equation*}
where $\boldsymbol{\beta}$ is a function on $\Gamma$ satisfying
\begin{equation*}
	\boldsymbol{\beta}\cdot \mathbf{n}=\frac{1}{2}{\rm sign}(\mathbf{1}\cdot \mathbf{n})
\end{equation*}
with the vector $\mathbf{1}=[1,1]^{T}$ and `$\rm sign$' standing for the sign function.
\begin{remark}
	If $\mathbf{1}\cdot \mathbf{n}=0$, one can choose the $\boldsymbol{\beta}$ satisfying
	\begin{equation*}
		\boldsymbol{\beta}\cdot \mathbf{n}=\frac{1}{2}{\rm sign}(\mathbf{1}_{\sigma}\cdot \mathbf{n}),
	\end{equation*}
where $\mathbf{1}_{\sigma}=[1+\sigma,1-\sigma]^{T}$with some $\sigma\neq0$.
\end{remark}

Here, we choose the  alternating direction flux provided in \cite{Dong.2009AoaLDGMfLTDFOP,Cockburn.2007AAotMDLDGMfCDP,Xu.2014DGMfFCDE}, i.e.,
\begin{equation}\label{eqflux1i}
	\begin{aligned}
		&\hat{u}_{h}=u_{h}^{+},\quad\hat{\mathbf{q}}_{h}=\mathbf{q}_{h}^{-}\quad{\rm for}~~ \bar{\Gamma}\in\Gamma_{\mathbb{I}};\\
	&\hat{u}_{h}=0,\quad  \hat{\mathbf{q}}_{h}=\mathbf{q}_{h}^{-}+g^{-}(u),\quad \mathbf{q}_{h}^{-}=\mathbf{q}_{h}^{+}\quad{\rm for}~~ \bar{\Gamma}\in\Gamma_{\mathbb{B}};
	\end{aligned}
\end{equation}
and an alternative choice is
\begin{equation}\label{eqflux2i}
	\begin{aligned}
		&\hat{u}_{h}=u_{h}^{-},\quad\hat{\mathbf{q}}_{h}=\mathbf{q}_{h}^{+}\quad{\rm for}~~ \bar{\Gamma}\in\Gamma_{\mathbb{I}};\\
	&\hat{u}_{h}=0,\quad  \hat{\mathbf{q}}_{h}=\mathbf{q}_{h}^{+}+g^{+}(u),\quad \mathbf{q}_{h}^{+}=\mathbf{q}_{h}^{-}\quad{\rm for}~~ \bar{\Gamma}\in\Gamma_{\mathbb{B}},
	\end{aligned}
\end{equation}
where $g^{\pm}(u)$ satisfies
\begin{equation*}
	 g^{\pm}(u)=\left\{ \begin{aligned}
	&\pm\frac{\vartheta[u]}{h},\quad\mathbf{1}\cdot\mathbf{n}^{\pm}>0,\\
	&0,\qquad {\rm otherwise}
	\end{aligned}\right.
\end{equation*}
with $\vartheta>0$. For convenience, we denote the subspace $\Gamma^{\pm}_{\mathbb{B}}=\{\bar{\Gamma}\in\Gamma_{\mathbb{B}},\mathbf{1}\cdot\mathbf{n}^{\pm}>0\}$.

For the simplicity of the theoretical analysis, we denote
\begin{equation}\label{equdefBi}
	\begin{aligned}
		\mathbf{B}(\boldsymbol{\phi}_{h};\boldsymbol{\psi}_{h})=&\int_{0}^{T}\sum_{i=1}^{K}\Bigg(\left (\frac{\partial u_{h}}{\partial t},v_{h}\right )_{I_{i}}-(\nabla\cdot \mathbf{q}_{h}, v_{h})_{I_{i}}+(\mathbf{n}\cdot(\mathbf{q}_{h}-\hat{\mathbf{q}}_{h}),v_{h})_{\partial I_{i}}\\
		&-\left (\mathbf{q}_{h},\mathbf{w}_{h}\right )_{I_{i}}+\left ((-\Delta)^{s-1}\mathbf{p}_{h},\mathbf{w}_{h}\right )_{I_{i}}\\
		&+\left (\mathbf{p}_{h},\mathbf{z}_{h}\right )_{I_{i}}-(\nabla u_{h}, \mathbf{z}_{h})_{I_{i}}+(u_{h}-\hat{u}_{h},\mathbf{n}\cdot\mathbf{z}_{h})_{\partial I_{i}}\Bigg)dt,
	\end{aligned}
\end{equation}
where $\boldsymbol{\phi}_{h}=\{u_{h},\mathbf{p}_{h},\mathbf{q}_{h}\}$ and $\boldsymbol{\psi}_{h}=\{v_{h},\mathbf{w}_{h},\mathbf{z}_{h}\}$.
Thus, the space semi-discrete scheme can be written as: find $\boldsymbol{\phi}_{h}\in H^{1}(0,T,V_{h,k})\times(L^{2}(0,T,V_{h,k}))^{2}\times(L^{2}(0,T,V_{h,k}))^{2}$ satisfying
\begin{equation*}
	\mathbf{B}(\boldsymbol{\phi}_{h};\boldsymbol{\psi}_{h})=\int_{0}^{T}\sum_{i=1}^{K}(f,v_{h})_{I_{i}}dt,
\end{equation*}
for all $\boldsymbol{\psi}_{h}\in H^{1}(0,T;V_{h,k})\times(L^{2}(0,T;V_{h,k}))^{2}\times(L^{2}(0,T;V_{h,k}))^{2}$.

\section{Stability and error estimates}
In this section, we discuss the stability and convergence of the semi-discrete scheme \eqref{equationdis}.
\subsection{Stability analysis}
Let $\{\bar{u}_{h},\bar{\mathbf{p}}_{h},\bar{\mathbf{q}}_{h}\}\in H^{1}(0,T,V_{h,k})\times(L^{2}(0,T,V_{h,k}))^{2}\times(L^{2}(0,T,V_{h,k}))^{2}$ be the approximations of $\{u_{h},\mathbf{p}_{h},\mathbf{q}_{h}\}$ and $\boldsymbol{\varepsilon}=\{\varepsilon_{u},\boldsymbol{\varepsilon}_{p},\boldsymbol{\varepsilon}_{q}\}=\{u_{h}-\bar{u}_{h},\mathbf{p}_{h}-\bar{\mathbf{p}}_{h},\mathbf{q}_{h}-\bar{\mathbf{q}}_{h}\}$ be the round-off errors, which satisfies
\begin{equation}\label{eqrondoff}
	\mathbf{B}(\boldsymbol{\varepsilon};\boldsymbol{\psi}_{h})=0,
\end{equation}
for all $\boldsymbol{\psi}_{h}\in H^{1}(0,T;V_{h,k})\times(L^{2}(0,T;V_{h,k}))^{2}\times(L^{2}(0,T;V_{h,k}))^{2}$. Then we establish the stability of our scheme \eqref{equationdis}.
\begin{theorem}\label{thmstab1}
	The scheme \eqref{equationdis} with flux \eqref{eqflux1i} is $L^{2}$ stable, and for all $T>0$, we have
		\begin{equation*}
		\begin{aligned}
			\|\varepsilon_{u}(T)\|^{2}_{L^{2}(\Omega_{h})}=\|\varepsilon_{u}(0)\|^{2}_{L^{2}(\Omega_{h})}-2\int_{0}^{T}\left ((-\Delta)^{s-1}\boldsymbol{\varepsilon}_{p},\boldsymbol{\varepsilon}_{p}\right )_{\Omega_{h}}+\left(\varepsilon^{+}_{u},\frac{\vartheta \varepsilon^{+}_{u}}{h} \right)_{\Gamma^{+}_{\mathbb{B}}}dt.
		\end{aligned}
	\end{equation*}
\end{theorem}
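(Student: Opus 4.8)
The plan is to use the standard LDG energy argument: take the round-off error system \eqref{eqrondoff} and make a clever choice of test functions $\boldsymbol{\psi}_h$ so that the interior (volume) terms telescope and collapse, leaving only a time-derivative term, the nonlocal term $((-\Delta)^{s-1}\boldsymbol{\varepsilon}_p,\boldsymbol{\varepsilon}_p)$, and boundary contributions. First I would set $v_h=\varepsilon_u$, $\mathbf{w}_h=\boldsymbol{\varepsilon}_q$, $\mathbf{z}_h=\boldsymbol{\varepsilon}_p$ in $\mathbf{B}(\boldsymbol{\varepsilon};\boldsymbol{\psi}_h)=0$. With this choice the first equation contributes $(\partial_t\varepsilon_u,\varepsilon_u)_{I_i}$ together with volume and face terms involving $\boldsymbol{\varepsilon}_q$; the second equation gives $-(\boldsymbol{\varepsilon}_q,\boldsymbol{\varepsilon}_q)_{I_i}+((-\Delta)^{s-1}\boldsymbol{\varepsilon}_p,\boldsymbol{\varepsilon}_q)_{I_i}$ — wait, one must be careful: because $\mathbf{q}_h$ is tested against $\mathbf{w}_h=\boldsymbol{\varepsilon}_q$ while $\mathbf{p}_h$ appears in the nonlocal term, to extract $((-\Delta)^{s-1}\boldsymbol{\varepsilon}_p,\boldsymbol{\varepsilon}_p)$ one actually wants the second line to be paired so that $\boldsymbol{\varepsilon}_q$ gets replaced by $\boldsymbol{\varepsilon}_p$. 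The correct bookkeeping is: summing the three equations with $(v_h,\mathbf{w}_h,\mathbf{z}_h)=(\varepsilon_u,\boldsymbol{\varepsilon}_p,\boldsymbol{\varepsilon}_q)$ — i.e. swapping the roles so $\mathbf{w}_h=\boldsymbol{\varepsilon}_p$ and $\mathbf{z}_h=\boldsymbol{\varepsilon}_q$. Then line two yields $-(\boldsymbol{\varepsilon}_q,\boldsymbol{\varepsilon}_p)_{I_i}+((-\Delta)^{s-1}\boldsymbol{\varepsilon}_p,\boldsymbol{\varepsilon}_p)_{I_i}$, while line three yields $(\boldsymbol{\varepsilon}_p,\boldsymbol{\varepsilon}_q)_{I_i}-(\nabla\varepsilon_u,\boldsymbol{\varepsilon}_q)_{I_i}+(\varepsilon_u-\hat\varepsilon_u,\mathbf{n}\cdot\boldsymbol{\varepsilon}_q)_{\partial I_i}$. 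The terms $\mp(\boldsymbol{\varepsilon}_q,\boldsymbol{\varepsilon}_p)_{I_i}$ cancel, and one is left with a $\partial_t$ term, the nonlocal term, and the combination of volume terms $-(\nabla\cdot\boldsymbol{\varepsilon}_q,\varepsilon_u)_{I_i}-(\nabla\varepsilon_u,\boldsymbol{\varepsilon}_q)_{I_i}$ plus all the face terms.

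Next I would handle the volume and face terms. Using integration by parts, $-(\nabla\cdot\boldsymbol{\varepsilon}_q,\varepsilon_u)_{I_i}-(\nabla\varepsilon_u,\boldsymbol{\varepsilon}_q)_{I_i}=-(\mathbf{n}\cdot\boldsymbol{\varepsilon}_q,\varepsilon_u)_{\partial I_i}$, so after summing over $i$ all volume contributions become a sum of edge integrals. Collecting every edge term — those coming from $(\mathbf{n}\cdot(\boldsymbol{\varepsilon}_q-\hat{\boldsymbol{\varepsilon}}_q),\varepsilon_u)_{\partial I_i}$, from the integration-by-parts boundary term, and from $(\varepsilon_u-\hat\varepsilon_u,\mathbf{n}\cdot\boldsymbol{\varepsilon}_q)_{\partial I_i}$ — I would rewrite the sum over elements as a sum over interior edges in $\Gamma_{\mathbb{I}}$ and boundary edges in $\Gamma_{\mathbb{B}}$, using the standard identity that $\sum_i (\mathbf{n}\cdot\mathbf{a},b)_{\partial I_i}$ on an interior edge equals $([\mathbf{a}],\{\!\{b\}\!\})+(\{\!\{\mathbf{a}\}\!\},[b])$. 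The key algebraic fact is that with the alternating flux \eqref{eqflux1i}, namely $\hat\varepsilon_u=\varepsilon_u^{+}$ and $\hat{\boldsymbol{\varepsilon}}_q=\boldsymbol{\varepsilon}_q^{-}$ on $\Gamma_{\mathbb{I}}$, the interior edge terms cancel identically — this is the well-known consistency/antisymmetry property of the alternating LDG flux and follows from the definition $u^\pm=\{\!\{u\}\!\}\pm\boldsymbol{\beta}\cdot[u]$ together with $\boldsymbol{\beta}\cdot\mathbf{n}=\tfrac12\mathrm{sign}(\mathbf{1}\cdot\mathbf{n})$. On $\Gamma_{\mathbb{B}}$, using $\hat\varepsilon_u=0$, $\hat{\boldsymbol{\varepsilon}}_q=\boldsymbol{\varepsilon}_q^{-}+g^{-}(\varepsilon_u)$ and $\boldsymbol{\varepsilon}_q^{-}=\boldsymbol{\varepsilon}_q^{+}$, together with the zero exterior data, the surviving boundary term reduces to $-(g^{-}(\varepsilon_u),\varepsilon_u)_{\Gamma_{\mathbb{B}}}$, which by the definition of $g^{-}$ equals $-\left(\varepsilon_u^{+},\tfrac{\vartheta\varepsilon_u^{+}}{h}\right)_{\Gamma_{\mathbb{B}}^{+}}$.

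Finally I would assemble: after the cancellations, $\mathbf{B}(\boldsymbol{\varepsilon};\boldsymbol{\psi}_h)=0$ becomes
\begin{equation*}
\int_0^T \Big[\tfrac12\tfrac{d}{dt}\|\varepsilon_u\|^2_{L^2(\Omega_h)}+\big((-\Delta)^{s-1}\boldsymbol{\varepsilon}_p,\boldsymbol{\varepsilon}_p\big)_{\Omega_h}+\Big(\varepsilon_u^{+},\tfrac{\vartheta\varepsilon_u^{+}}{h}\Big)_{\Gamma_{\mathbb{B}}^{+}}\Big]dt=0,
\end{equation*}
where I used $\sum_i(\partial_t\varepsilon_u,\varepsilon_u)_{I_i}=\tfrac12\tfrac{d}{dt}\|\varepsilon_u\|^2_{L^2(\Omega_h)}$. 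Integrating the time-derivative term from $0$ to $T$ and rearranging gives exactly the claimed identity; $L^2$ stability follows because the nonlocal term is nonnegative (the kernel in \eqref{equnelap} is, up to the negative constant $c_{2,s-1}$, positive, so $((-\Delta)^{s-1}\boldsymbol{\varepsilon}_p,\boldsymbol{\varepsilon}_p)\ge 0$; indeed on the Fourier side it equals $\int|\boldsymbol{\xi}|^{2s-2}|\widehat{\boldsymbol{\varepsilon}_p}|^2d\boldsymbol{\xi}\ge0$) and the penalty term is manifestly nonnegative for $\vartheta>0$. The main obstacle is the careful edge bookkeeping in the second paragraph: one must track signs and $\mathbf{n}^\pm$ orientations scrupulously across all three equations and verify that the alternating-flux choice makes the interior-edge contributions vanish and leaves precisely the boundary penalty term with the correct sign; this is where analogous LDG proofs typically spend most of their effort, and where the subtlety introduced by the $\boldsymbol{\beta}$-dependent fluxes and the boundary condition $\boldsymbol{\varepsilon}_q^{-}=\boldsymbol{\varepsilon}_q^{+}$ on $\Gamma_{\mathbb{B}}$ must be used.
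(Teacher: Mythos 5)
Your argument is correct and is essentially the paper's own proof: test the round-off error identity with $\boldsymbol{\psi}_h=\boldsymbol{\varepsilon}$ (so $\mathbf{w}_h=\boldsymbol{\varepsilon}_p$, $\mathbf{z}_h=\boldsymbol{\varepsilon}_q$, exactly the pairing you settle on after your self-correction), cancel the cross terms $\mp(\boldsymbol{\varepsilon}_q,\boldsymbol{\varepsilon}_p)$, reduce the volume-plus-edge terms to the boundary penalty via integration by parts and the alternating flux \eqref{eqflux1i}, and invoke Parseval to get $((-\Delta)^{s-1}\boldsymbol{\varepsilon}_p,\boldsymbol{\varepsilon}_p)_{\Omega_h}\ge 0$. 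The only blemish is the parenthetical claim that $c_{2,s-1}$ is negative --- it is in fact positive for $s\in(0,1)$, and positivity of the kernel alone would not by itself give positivity of the quadratic form --- but the Fourier-side identity $\int|\boldsymbol{\xi}|^{2s-2}|\widehat{\boldsymbol{\varepsilon}_p}|^{2}\,d\boldsymbol{\xi}\ge 0$ that you state immediately afterwards is the correct and sufficient justification, and is precisely what the paper uses.
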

\begin{proof}
	According to \eqref{equdefBi}, $\mathbf{B}(\boldsymbol{\varepsilon};\boldsymbol{\varepsilon})$ can be written as
	\begin{equation*}
	\begin{aligned}
	&\mathbf{B}(\boldsymbol{\varepsilon};\boldsymbol{\varepsilon})\\
	=&\int_{0}^{T}\sum_{i=1}^{K}\Bigg(\left (\frac{\partial \varepsilon_{u}}{\partial t},\varepsilon_{u}\right )_{I_{i}}-(\nabla\cdot \boldsymbol{\varepsilon}_{q}, \varepsilon_{u})_{I_{i}}+(\mathbf{n}\cdot(\boldsymbol{\varepsilon}_{q}-\hat{\boldsymbol{\varepsilon}}_{q}),\varepsilon_{u})_{\partial I_{i}}\\
	&-\left (\boldsymbol{\varepsilon}_{q},\boldsymbol{\varepsilon}_{p}\right )_{I_{i}}+\left ((-\Delta)^{s-1}\boldsymbol{\varepsilon}_{p},\boldsymbol{\varepsilon}_{p}\right )_{ I_{i}}\\
	&+\left (\boldsymbol{\varepsilon}_{p},\boldsymbol{\varepsilon}_{q}\right )_{I_{i}}-(\nabla \varepsilon_{u}, \boldsymbol{\varepsilon}_{q})_{I_{i}}+(\varepsilon_{u}-\hat{\varepsilon}_{u},\mathbf{n}\cdot\boldsymbol{\varepsilon}_{q})_{\partial I_{i}}\Bigg)dt=0,\\
	\end{aligned}
	\end{equation*}
	which yields
	\begin{equation*}
	\begin{aligned}
		&\mathbf{B}(\boldsymbol{\varepsilon};\boldsymbol{\varepsilon})\\
		=&\int_{0}^{T}\frac{1}{2} \frac{\partial }{\partial t}\|\varepsilon_{u}\|^{2}_{L^{2}(\Omega_{h})}-(\nabla\cdot \boldsymbol{\varepsilon}_{q}, \varepsilon_{u})_{\Omega_{h}}+\sum_{i=1}^{K}(\mathbf{n}\cdot(\boldsymbol{\varepsilon}_{q}-\hat{\boldsymbol{\varepsilon}}_{q}),\varepsilon_{u})_{\partial I_{i}}\\
	&+\left ((-\Delta)^{s-1}\boldsymbol{\varepsilon}_{p},\boldsymbol{\varepsilon}_{p}\right )_{\Omega_{h}}
	-(\nabla \varepsilon_{u}, \boldsymbol{\varepsilon}_{q})_{\Omega_{h}}+\sum_{i=1}^{K}(\varepsilon_{u}-\hat{\varepsilon}_{u},\mathbf{n}\cdot\boldsymbol{\varepsilon}_{q})_{\partial I_{i}}dt=0.\\
	\end{aligned}
	\end{equation*}
	Using \eqref{eqflux1i} and doing simple calculations show that
	\begin{equation*}
		\begin{aligned}
			&(\nabla\cdot\boldsymbol{\varepsilon}_{q},\varepsilon_{u})_{\Omega_{h}}-\sum_{i=1}^{K}(\mathbf{n}\cdot(\boldsymbol{\varepsilon}_{q}-\hat{\boldsymbol{\varepsilon}}_{q}),\varepsilon_{u})_{\partial I_{i}}+(\nabla \varepsilon_{u}, \boldsymbol{\varepsilon}_{q})_{\Omega_{h}}-\sum_{i=1}^{K}(\varepsilon_{u}-\hat{\varepsilon}_{u},\mathbf{n}\cdot\boldsymbol{\varepsilon}_{q})_{\partial I_{i}}\\
			&\qquad=-\left(\varepsilon^{+}_{u},\frac{\vartheta \varepsilon^{+}_{u}}{h}\right)_{\Gamma^{+}_{\mathbb{B}}}.
		\end{aligned}
	\end{equation*}
Further, by Parseval's equality, we obtain
	\begin{equation*}
		\left ((-\Delta)^{s-1}\boldsymbol{\varepsilon}_{p},\boldsymbol{\varepsilon}_{p}\right )_{\Omega_{h}}\geq 0.
	\end{equation*}
So
	\begin{equation*}
		\begin{aligned}
			&\frac{1}{2} (\|\varepsilon_{u}(T)\|^{2}_{L^{2}(\Omega_{h})}-\|\varepsilon_{u}(0)\|^{2}_{L^{2}(\Omega_{h})})\\
			&\qquad=-\int_{0}^{T}\left ((-\Delta)^{s-1}\boldsymbol{\varepsilon}_{p},\boldsymbol{\varepsilon}_{p}\right )_{\Omega_{h}}+\left(\varepsilon^{+}_{u},\frac{\vartheta \varepsilon^{+}_{u}}{h}\right)_{\Gamma^{+}_{\mathbb{B}}}dt\leq 0,
		\end{aligned}
	\end{equation*}
	which leads to the desired results.
\end{proof}
Similarly, we have
\begin{theorem}\label{thmstab2}
	The scheme \eqref{equationdis} with flux \eqref{eqflux2i} is $L^{2}$ stable, and for all $T>0$, there holds
	\begin{equation*}
		\begin{aligned}
			\|\varepsilon_{u}(T)\|^{2}_{L^{2}(\Omega_{h})}=\|\varepsilon_{u}(0)\|^{2}_{L^{2}(\Omega_{h})}-2\int_{0}^{T}\left( (-\Delta)^{s-1}\boldsymbol{\varepsilon}_{p},\boldsymbol{\varepsilon}_{p}\right )_{\Omega_{h}}+\left(\varepsilon^{-}_{u},\frac{\vartheta \varepsilon^{-}_{u}}{h}\right)_{\Gamma^{-}_{\mathbb{B}}}dt.
		\end{aligned}
	\end{equation*}
\end{theorem}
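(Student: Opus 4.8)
The plan is to reproduce the argument of Theorem~\ref{thmstab1} with the ``$+$'' and ``$-$'' traces interchanged throughout (and correspondingly $\Gamma_{\mathbb{B}}^{+}$ with $\Gamma_{\mathbb{B}}^{-}$, and $g^{-}$ with $g^{+}$), since the flux~\eqref{eqflux2i} is the mirror image of~\eqref{eqflux1i}. First I would set $\boldsymbol{\psi}_{h}=\boldsymbol{\varepsilon}$ in the error identity~\eqref{eqrondoff}, giving $\mathbf{B}(\boldsymbol{\varepsilon};\boldsymbol{\varepsilon})=0$, and expand the left-hand side with~\eqref{equdefBi}. As in Theorem~\ref{thmstab1}, the elementwise terms $-(\boldsymbol{\varepsilon}_{q},\boldsymbol{\varepsilon}_{p})_{I_{i}}$ and $(\boldsymbol{\varepsilon}_{p},\boldsymbol{\varepsilon}_{q})_{I_{i}}$ cancel; the time-derivative terms assemble to $\int_{0}^{T}\frac{1}{2}\frac{\partial}{\partial t}\|\varepsilon_{u}\|_{L^{2}(\Omega_{h})}^{2}\,dt$; and the nonlocal terms assemble to $\int_{0}^{T}\big((-\Delta)^{s-1}\boldsymbol{\varepsilon}_{p},\boldsymbol{\varepsilon}_{p}\big)_{\Omega_{h}}\,dt$, which is nonnegative by Parseval's identity (the symbol $|\boldsymbol{\xi}|^{2s-2}$ of $(-\Delta)^{s-1}$ being nonnegative). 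There remains the coupling block
\begin{equation*}
\int_{0}^{T}\Big(-(\nabla\cdot\boldsymbol{\varepsilon}_{q},\varepsilon_{u})_{\Omega_{h}}+\sum_{i=1}^{K}(\mathbf{n}\cdot(\boldsymbol{\varepsilon}_{q}-\hat{\boldsymbol{\varepsilon}}_{q}),\varepsilon_{u})_{\partial I_{i}}-(\nabla\varepsilon_{u},\boldsymbol{\varepsilon}_{q})_{\Omega_{h}}+\sum_{i=1}^{K}(\varepsilon_{u}-\hat{\varepsilon}_{u},\mathbf{n}\cdot\boldsymbol{\varepsilon}_{q})_{\partial I_{i}}\Big)\,dt .
\end{equation*}

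The crux is to show that, with the flux~\eqref{eqflux2i}, this block equals $\int_{0}^{T}\big(\varepsilon_{u}^{-},\vartheta\varepsilon_{u}^{-}/h\big)_{\Gamma_{\mathbb{B}}^{-}}\,dt$. I would integrate by parts on each $I_{i}$, using $(\nabla\cdot\boldsymbol{\varepsilon}_{q},\varepsilon_{u})_{I_{i}}+(\nabla\varepsilon_{u},\boldsymbol{\varepsilon}_{q})_{I_{i}}=(\mathbf{n}\cdot\boldsymbol{\varepsilon}_{q},\varepsilon_{u})_{\partial I_{i}}$ to collapse the volume terms into edge integrals, re-index the sum over faces $\bar{\Gamma}\in\Gamma$, and express the traces through $\{\!\{\cdot\}\!\}$ and $[\cdot]$. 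On the interior faces $\Gamma_{\mathbb{I}}$ the alternating choice $\hat{u}_{h}=u_{h}^{-}$, $\hat{\mathbf{q}}_{h}=\mathbf{q}_{h}^{+}$ makes the interfacial terms cancel identically --- the usual LDG cancellation, carried out with the traces opposite to those in Theorem~\ref{thmstab1}. On the boundary faces $\Gamma_{\mathbb{B}}$ the Dirichlet prescription $\hat{u}_{h}=0$, the single-valuedness $\mathbf{q}_{h}^{+}=\mathbf{q}_{h}^{-}$, and the definition of $g^{+}$ collapse the remaining terms to the penalty $(\varepsilon_{u}^{-},\vartheta\varepsilon_{u}^{-}/h)_{\Gamma_{\mathbb{B}}^{-}}$, in complete analogy with the reduction to $(\varepsilon_{u}^{+},\vartheta\varepsilon_{u}^{+}/h)_{\Gamma_{\mathbb{B}}^{+}}$ in the proof of Theorem~\ref{thmstab1}.

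Assembling the four pieces turns $\mathbf{B}(\boldsymbol{\varepsilon};\boldsymbol{\varepsilon})=0$ into
\begin{equation*}
\frac{1}{2}\big(\|\varepsilon_{u}(T)\|_{L^{2}(\Omega_{h})}^{2}-\|\varepsilon_{u}(0)\|_{L^{2}(\Omega_{h})}^{2}\big)+\int_{0}^{T}\Big(\big((-\Delta)^{s-1}\boldsymbol{\varepsilon}_{p},\boldsymbol{\varepsilon}_{p}\big)_{\Omega_{h}}+\big(\varepsilon_{u}^{-},\vartheta\varepsilon_{u}^{-}/h\big)_{\Gamma_{\mathbb{B}}^{-}}\Big)\,dt=0 ,
\end{equation*}
which is precisely the stated identity; since both terms under the integral are nonnegative, $\|\varepsilon_{u}(T)\|_{L^{2}(\Omega_{h})}\le\|\varepsilon_{u}(0)\|_{L^{2}(\Omega_{h})}$, i.e.\ the scheme is $L^{2}$ stable. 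I expect the only delicate point to be the edge-term bookkeeping in the boundary block --- tracking the signs coming from $\boldsymbol{\beta}\cdot\mathbf{n}$ and from the trace swap --- but this is a line-by-line transcription of the corresponding step for Theorem~\ref{thmstab1} and introduces no genuinely new difficulty.
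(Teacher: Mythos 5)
Your proposal is correct and follows exactly the route the paper intends: the paper proves Theorem~\ref{thmstab2} by simply invoking the argument of Theorem~\ref{thmstab1} with the roles of the ``$+$'' and ``$-$'' traces (and of $\Gamma_{\mathbb{B}}^{+}$, $g^{-}$ versus $\Gamma_{\mathbb{B}}^{-}$, $g^{+}$) interchanged, which is precisely your plan. Your bookkeeping of the cancellation of the $(\boldsymbol{\varepsilon}_{q},\boldsymbol{\varepsilon}_{p})$ terms, the Parseval-based nonnegativity of the nonlocal term, and the collapse of the edge terms to the boundary penalty on $\Gamma_{\mathbb{B}}^{-}$ matches the paper's computation line for line.
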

\subsection{Error estimates}
Introduce $L^{2}$ orthogonal projection operators $\mathcal{P}: L^{2}(\Omega)\rightarrow V_{h,k}$ and $\mathcal{Q}:(L^{2}(\Omega))^{2}\rightarrow (V_{h,k})^{2}$ as, for all the elements $I_{j}$,
\begin{equation}\label{equorth}
	\begin{aligned}
		&(\mathcal{P}u-u,v)_{I_{j}}=0 \qquad \forall v\in P_{k}(I_{j});\\
		&(\mathcal{Q}\mathbf{u}-\mathbf{u},\mathbf{v})_{I_{j}}=0\qquad \forall \mathbf{v}\in (P_{k}(I_{j}))^{2}.\\
	\end{aligned}
\end{equation}

Following \cite{Dong.2009AoaLDGMfLTDFOP,Cockburn.2007AAotMDLDGMfCDP},  we give the definitions of the projections $\mathcal{P}^{+}$, $\mathcal{Q}^{+}$, $\mathcal{P}^{-}$, and $\mathcal{Q}^{-}$.
 Given a scalar function $u\in L^{2}(\Omega)$ and a vector function $\mathbf{u}\in (L^{2}(\Omega))^{2}$, for an arbitrary element $I_{j}$ and an arbitrary edge $\Gamma_{0}\in \partial I_{j}$ that satisfies $\mathbf{1}\cdot\mathbf{n}_{\Gamma_{0}}<0$, then  $\mathcal{P}^{+}u$ and $\mathcal{Q}^{+}\mathbf{u}$ have the following properties, for all the elements $I_{j}$,
\begin{equation}\label{equproPp}
	\begin{aligned}
		(\mathcal{P}^{+}u-u,v)_{I_{j}}=0&\quad \forall v\in P_{k-1}(I_{j})~~~~{\rm if} ~k\geq 1,\\
		(\mathcal{P}^{+}u-u,v)_{\bar{\Gamma}}=0&\quad \forall v\in P_{k}(\bar{\Gamma})~~{\rm and}~~ \forall \bar{\Gamma}\in \partial I_{j},~~\bar{\Gamma}\neq \Gamma_{0},\\
	\end{aligned}
\end{equation}
and
\begin{equation}\label{equproQp}
	\begin{aligned}
		(\mathcal{Q}^{+}\mathbf{u}-\mathbf{u},\mathbf{v})_{I_{j}}=0&\quad \forall \mathbf{v}\in (P_{k-1}(I_{j}))^{2}~~~~{\rm if} ~k\geq 1,\\
		((\mathcal{Q}^{+}\mathbf{u}-\mathbf{u})\cdot \mathbf{n},v)_{\bar{\Gamma}}=0&\quad \forall v\in P_{k}(\bar{\Gamma})~~{\rm and}~~ \forall \bar{\Gamma}\in \partial I_{j},~ ~\bar{\Gamma}\neq \Gamma_{0}.\\
	\end{aligned}
\end{equation}
The projections $\mathcal{P}^{-}$ and $\mathcal{Q}^{-}$ can be similarly defined as follows. Given a scalar function $u\in L^{2}(\Omega)$ and a vector function $\mathbf{u}\in (L^{2}(\Omega))^{2}$, for an arbitrary element $I_{j}$ and an arbitrary edge $\Gamma_{0}\in \partial I_{j}$ that satisfies $\mathbf{1}\cdot\mathbf{n}_{\Gamma_{0}}>0$, then $\mathcal{P}^{-}u$ and $\mathcal{Q}^{-}\mathbf{u}$ satisfy, for all the elements $I_{j}$,
\begin{equation}\label{equproPm}
	\begin{aligned}
		(\mathcal{P}^{-}u-u,v)_{I_{j}}=0&\quad \forall v\in P_{k-1}(I_{j})~~~~{\rm if} ~k\geq 1,\\
		(\mathcal{P}^{-}u-u,v)_{\bar{\Gamma}}=0&\quad \forall v\in P_{k}(\bar{\Gamma})~~~{\rm and}~~~~ \forall \bar{\Gamma}\in \partial I_{j},~ ~\bar{\Gamma}\neq \Gamma_{0},\\
	\end{aligned}
\end{equation}
and
\begin{equation}\label{equproQm}
	\begin{aligned}
		(\mathcal{Q}^{-}\mathbf{u}-\mathbf{u},\mathbf{v})_{I_{j}}=0&\quad \forall \mathbf{v}\in (P_{k-1}(I_{j}))^{2}~~~{\rm if} ~k\geq 1,\\
		((\mathcal{Q}^{-}\mathbf{u}-\mathbf{u})\cdot \mathbf{n},v)_{\bar{\Gamma}}=0&\quad \forall v\in P_{k}(\bar{\Gamma})~~~{\rm and}~~~~ \forall \bar{\Gamma}\in \partial I_{j},~ ~\bar{\Gamma}\neq \Gamma_{0}.\\
	\end{aligned}
\end{equation}

\begin{theorem}\label{thmer1}
	Let $u$ and $u_{h}$ be the solutions of \eqref{equationvar} and \eqref{equationdis} with flux \eqref{eqflux1i}. Then we have
	\begin{equation*}
		\|u-u_{h}\|_{L^{2}(\Omega_{h})}\leq Ch^{k+\frac{1}{2}},
	\end{equation*}
where $k\geq 1$.
\end{theorem}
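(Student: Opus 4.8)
The plan is to carry out the classical LDG energy argument, using Theorem~\ref{thmstab1} for the ``positive'' part of the energy balance and the approximation properties of the projections $\mathcal{P}^{\pm}$, $\mathcal{Q}^{\pm}$ for the consistency error. First I would decompose the error through the projections compatible with the flux \eqref{eqflux1i}: write
\begin{equation*}
u-u_h=\eta_u-\xi_u,\quad \mathbf{p}-\mathbf{p}_h=\boldsymbol{\eta}_p-\boldsymbol{\xi}_p,\quad \mathbf{q}-\mathbf{q}_h=\boldsymbol{\eta}_q-\boldsymbol{\xi}_q,
\end{equation*}
with $\eta_u=u-\mathcal{P}^{+}u$, $\boldsymbol{\eta}_q=\mathbf{q}-\mathcal{Q}^{-}\mathbf{q}$, $\boldsymbol{\eta}_p=\mathbf{p}-\mathcal{Q}\mathbf{p}$, and $\xi_u=u_h-\mathcal{P}^{+}u\in V_{h,k}$, $\boldsymbol{\xi}_q=\mathbf{q}_h-\mathcal{Q}^{-}\mathbf{q}\in (V_{h,k})^2$, $\boldsymbol{\xi}_p=\mathbf{p}_h-\mathcal{Q}\mathbf{p}\in (V_{h,k})^2$. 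Standard approximation theory for $\mathcal{P}^{+}$, $\mathcal{Q}^{-}$ and $\mathcal{Q}$ gives $\|\eta_u\|_{L^2(\Omega_h)}+\|\partial_t\eta_u\|_{L^2(\Omega_h)}+\|\boldsymbol{\eta}_p\|_{L^2(\Omega_h)}+\|\boldsymbol{\eta}_q\|_{L^2(\Omega_h)}\le Ch^{k+1}$ together with the trace bounds $\|\eta_u\|_{L^2(\Gamma)}+\|\boldsymbol{\eta}_q\|_{L^2(\Gamma)}\le Ch^{k+\frac12}$, under the (implicit) assumption that the exact solution is sufficiently smooth. Since $\{u,\mathbf{p},\mathbf{q}\}$ solves \eqref{equationvar} with single-valued exact fluxes and $u=0$ on $\Gamma_{\mathbb{B}}$, the scheme is consistent, so $\mathbf{B}(\{u-u_h,\mathbf{p}-\mathbf{p}_h,\mathbf{q}-\mathbf{q}_h\};\boldsymbol{\psi}_h)=0$ for every discrete $\boldsymbol{\psi}_h$, and therefore $\mathbf{B}(\{\xi_u,\boldsymbol{\xi}_p,\boldsymbol{\xi}_q\};\boldsymbol{\psi}_h)=\mathbf{B}(\{\eta_u,\boldsymbol{\eta}_p,\boldsymbol{\eta}_q\};\boldsymbol{\psi}_h)$.

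Next I would take $\boldsymbol{\psi}_h=\{\xi_u,\boldsymbol{\xi}_p,\boldsymbol{\xi}_q\}$. The left-hand side is evaluated exactly as in the proof of Theorem~\ref{thmstab1}:
\begin{equation*}
\begin{aligned}
\mathbf{B}(\{\xi_u,\boldsymbol{\xi}_p,\boldsymbol{\xi}_q\};\{\xi_u,\boldsymbol{\xi}_p,\boldsymbol{\xi}_q\})={}&\frac12\|\xi_u(T)\|_{L^2(\Omega_h)}^2-\frac12\|\xi_u(0)\|_{L^2(\Omega_h)}^2\\
&+\int_0^T\big((-\Delta)^{s-1}\boldsymbol{\xi}_p,\boldsymbol{\xi}_p\big)_{\Omega_h}+\Big(\xi_u^{+},\frac{\vartheta\xi_u^{+}}{h}\Big)_{\Gamma^{+}_{\mathbb{B}}}\,dt,
\end{aligned}
\end{equation*}
and $\|\xi_u(0)\|_{L^2(\Omega_h)}\le Ch^{k+1}$ because the last line of \eqref{equationdis} makes $u_h(\cdot,0)$ the $L^2$ projection of $u_0$. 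For the right-hand side I would expand $\mathbf{B}(\{\eta_u,\boldsymbol{\eta}_p,\boldsymbol{\eta}_q\};\{\xi_u,\boldsymbol{\xi}_p,\boldsymbol{\xi}_q\})$ through \eqref{equdefBi} and use the defining relations \eqref{equorth}--\eqref{equproQm}: the volume products $(\boldsymbol{\eta}_q,\boldsymbol{\xi}_p)_{I_i}$ and $(\boldsymbol{\eta}_p,\boldsymbol{\xi}_q)_{I_i}$ vanish by \eqref{equorth}; after integration by parts the terms $(\nabla\cdot\boldsymbol{\eta}_q,\xi_u)_{I_i}$ and $(\nabla\eta_u,\boldsymbol{\xi}_q)_{I_i}$ lose their volume parts because $\nabla\xi_u$ and $\nabla\cdot\boldsymbol{\xi}_q$ lie in $P_{k-1}$; and, combining the remaining boundary parts with the numerical-flux terms, the fact that $\mathcal{P}^{+}$ and $\mathcal{Q}^{-}$ reproduce the $L^2(\bar{\Gamma})$-projection onto $P_k(\bar{\Gamma})$ on every face except the distinguished one makes the interior-edge contributions vanish against the polynomial traces of $\xi_u$ and $\boldsymbol{\xi}_q$. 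What remains is the time-derivative term $\int_0^T(\partial_t\eta_u,\xi_u)_{\Omega_h}\,dt$, the nonlocal term $\int_0^T\big((-\Delta)^{s-1}\boldsymbol{\eta}_p,\boldsymbol{\xi}_p\big)_{\Omega_h}\,dt$, and a residual collection of edge integrals attached to the distinguished faces and to the boundary.

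To close the estimate I would bound these survivors. For the nonlocal term, Cauchy--Schwarz with respect to the positive semidefinite form $\big((-\Delta)^{s-1}\cdot,\cdot\big)$ absorbs half of $\int_0^T\big((-\Delta)^{s-1}\boldsymbol{\xi}_p,\boldsymbol{\xi}_p\big)_{\Omega_h}\,dt$ into the left-hand side and leaves $C\int_0^T\big((-\Delta)^{s-1}\boldsymbol{\eta}_p,\boldsymbol{\eta}_p\big)_{\Omega_h}\,dt$, which, since the kernel in \eqref{equnelap} is $|\mathbf{x}-\mathbf{y}|^{-2s}$ with $2s<2$, is controlled by the Hardy--Littlewood--Sobolev inequality together with the boundedness of $\Omega$ by $C\int_0^T\|\boldsymbol{\eta}_p\|_{L^2(\Omega_h)}^2\,dt\le Ch^{2k+2}$. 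The time-derivative term is handled by Cauchy--Schwarz and $\|\partial_t\eta_u\|_{L^2(\Omega_h)}\le Ch^{k+1}$, producing $Ch^{2k+2}+\frac14\int_0^T\|\xi_u\|_{L^2(\Omega_h)}^2\,dt$. The remaining edge integrals are estimated by the trace inequality for the $\eta$-type quantities (contributing the factor $h^{k+\frac12}$) together with the inverse/trace inequality for the discrete functions; this is where I expect the main obstacle, because on a general shape-regular triangulation the direction $\mathbf{1}$ cannot be aligned with all element faces, so one face per element is left with only an $\mathcal{O}(h^{k+\frac12})$ bound that enters the energy balance linearly --- precisely the mechanism that degrades the rate from $\mathcal{O}(h^{k+1})$ to $\mathcal{O}(h^{k+\frac12})$. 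Collecting the bounds, applying Gronwall's inequality to obtain $\|\xi_u(T)\|_{L^2(\Omega_h)}\le Ch^{k+\frac12}$, and finishing with $\|u-u_h\|_{L^2(\Omega_h)}\le\|\eta_u\|_{L^2(\Omega_h)}+\|\xi_u\|_{L^2(\Omega_h)}\le Ch^{k+\frac12}$ yields the claimed estimate.
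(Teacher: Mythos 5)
Your overall architecture matches the paper's: the same triple of projections $\{\mathcal{P}^{+},\mathcal{Q},\mathcal{Q}^{-}\}$ matched to the flux \eqref{eqflux1i}, Galerkin orthogonality, testing with the discrete error, re-using the stability identity for the left-hand side, and Gr\"onwall at the end. However, there is a genuine gap in your accounting of which terms survive. You claim that \emph{both} volume products $(\boldsymbol{\eta}_q,\boldsymbol{\xi}_p)_{I_i}$ and $(\boldsymbol{\eta}_p,\boldsymbol{\xi}_q)_{I_i}$ vanish by \eqref{equorth}. Only the second does: $\boldsymbol{\eta}_p=\mathbf{p}-\mathcal{Q}\mathbf{p}$ is the error of the full $L^2$ projection, so it is orthogonal to $(P_k(I_i))^2$. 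But $\boldsymbol{\eta}_q=\mathbf{q}-\mathcal{Q}^{-}\mathbf{q}$ is governed by \eqref{equproQm}, which gives volume orthogonality only against $(P_{k-1}(I_i))^2$; since $\boldsymbol{\xi}_p\in(P_k(I_i))^2$, the product $(\boldsymbol{\eta}_q,\boldsymbol{\xi}_p)_{I_i}$ does \emph{not} vanish. This is exactly the term the paper isolates (its term $\uppercase\expandafter{\romannumeral3}$), subtracts a $(V_{h,k-1})^2$ approximation from $\boldsymbol{\xi}_p$, and bounds by $Ch^{2k+1}$ --- and it is the \emph{only} term in the whole balance of order $h^{2k+1}$, i.e.\ the actual source of the $\mathcal{O}(h^{k+\frac12})$ rate. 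By dropping it you have removed the mechanism that produces the stated convergence order.

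Correspondingly, your attribution of the half-order loss to leftover edge integrals does not match how the estimate closes. With the alternating flux and the face-moment conditions in \eqref{equproPp} and \eqref{equproQm}, the interior-edge contributions cancel completely (including at the distinguished faces, because the flux there takes the trace from the neighboring element where the projection error has vanishing face moments); what remains lives only on $\Gamma_{\mathbb{B}}$, and the term $(\mathbf{n}\cdot(\mathcal{Q}^{-}\mathbf{q}-\mathbf{q}),\mathcal{P}^{+}e_u)_{\Gamma^{+}_{\mathbb{B}}}$ is handled at full order $h^{2k+2}$ by Young's inequality, absorbing $\frac{1}{h}\|\mathcal{P}^{+}e_u\|^2_{L^2(\Gamma^{+}_{\mathbb{B}})}$ into the boundary penalty on the left-hand side. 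Note also that your proposed mechanism would not actually deliver $h^{k+\frac12}$: an interior-edge residual of size $h^{k+\frac12}$ paired with a discrete trace has no penalty to absorb into and, after the inverse trace inequality $\|\xi\|_{L^2(\partial I_i)}\le Ch^{-\frac12}\|\xi\|_{L^2(I_i)}$, would only yield $\mathcal{O}(h^{k})$. The remaining ingredients of your argument (the time-derivative term, the nonlocal term via positive semidefiniteness plus the $L^1_{\rm loc}$ kernel bound, the initial-data estimate) are consistent with the paper.
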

\begin{proof}
	Introduce
	\begin{equation*}
		\mathbf{e}=\{e_{u},\mathbf{e}_{p},\mathbf{e}_{q}\}=\{u-u_{h},\mathbf{p}-\mathbf{p}_{h},\mathbf{q}-\mathbf{q}_{h}\}.
	\end{equation*}
Combining \eqref{equationvar} and \eqref{equationdis}, we have
\begin{equation*}
	\mathbf{B}(\mathbf{e};\boldsymbol{\psi}_{h})=0
\end{equation*}
for all $\boldsymbol{\psi}_{h}\in H^{1}(0,T;V_{h,k})\times(L^{2}(0,T;V_{h,k}))^{2}\times(L^{2}(0,T;V_{h,k}))^{2}$. Simple calculations lead to
\begin{equation*}
	\begin{aligned}
		&\mathbf{B}(\boldsymbol{\mathcal{R}}\mathbf{e};\boldsymbol{\mathcal{R}}\mathbf{e})\\
		=&\mathbf{B}(\mathbf{e};\boldsymbol{\mathcal{R}}\mathbf{e})+\mathbf{B}(\mathbf{e}^{p};\boldsymbol{\mathcal{R}}\mathbf{e})\\
		=&\mathbf{B}(\mathbf{e}^{p};\boldsymbol{\mathcal{R}}\mathbf{e}),
	\end{aligned}
\end{equation*}
where $\mathbf{e}^{p}=\{e^{p}_{u},\mathbf{e}^{p}_{\mathbf{p}},\mathbf{e}^{p}_{\mathbf{q}}\}=\boldsymbol{\mathcal{R}}\boldsymbol{\phi}-\boldsymbol{\phi}$, $\boldsymbol{\phi}=\{u,\mathbf{p},\mathbf{q}\}$,
$\boldsymbol{\mathcal{R}}=\{\mathcal{P}^{+},\mathcal{Q},\mathcal{Q}^{-}\}$, and $\boldsymbol{\mathcal{R}}\boldsymbol{\phi}=\{\mathcal{P}^{+}u,\mathcal{Q}\mathbf{p},\mathcal{Q}^{-}\mathbf{q}\}$.

As for $\mathbf{B}(\mathbf{e}^{p};\boldsymbol{\mathcal{R}}\mathbf{e})$, there holds
\begin{equation*}
	\mathbf{B}(\mathbf{e}^{p};\boldsymbol{\mathcal{R}}\mathbf{e})=\uppercase\expandafter{\romannumeral1}+\uppercase\expandafter{\romannumeral2}+\uppercase\expandafter{\romannumeral3}+\uppercase\expandafter{\romannumeral4},
\end{equation*}
where
\begin{equation*}
	\begin{aligned}
		&\uppercase\expandafter{\romannumeral1}=\int_{0}^{T}\left (\frac{\partial e^{p}_{u}}{\partial t},\mathcal{P}^{+}e_{u}\right )_{\Omega_{h}}dt,\\
		&\uppercase\expandafter{\romannumeral2}=\int_{0}^{T}(\mathbf{e}^{p}_{\mathbf{q}},\nabla \mathcal{P}^{+}e_{u})_{\Omega_{h}}+(e^{p}_{u},\nabla\cdot \mathcal{Q}^{-} \mathbf{e}_{q})_{\Omega_{h}}+(\mathbf{e}^{p}_{\mathbf{p}},\mathcal{Q}^{-}\mathbf{e}_{\mathbf{q}})_{\Omega_{h}}dt,\\
		&\uppercase\expandafter{\romannumeral3}=-\int_{0}^{T}(\mathbf{e}^{p}_{\mathbf{q}},\mathcal{Q}\mathbf{e}_{\mathbf{p}})_{\Omega_{h}}dt,\\
		&\uppercase\expandafter{\romannumeral4}=-\sum_{i=1}^{K}\int_{0}^{T}(\mathbf{n}\cdot\hat{\mathbf{e}}^{p}_{\mathbf{q}},\mathcal{P}^{+}e_{u})_{\partial I_{i}}+(\hat{e}^{p}_{u},\mathbf{n}\cdot\mathcal{Q}^{-}\mathbf{e}_{\mathbf{q}})_{\partial I_{i}}dt,\\
		&\uppercase\expandafter{\romannumeral5}=\int_{0}^{T}((-\Delta)^{s-1}\mathbf{e}^{p}_{\mathbf{p}},\mathcal{Q}\mathbf{e}_{\mathbf{p}})_{\Omega_{h}}dt.
	\end{aligned}
\end{equation*}
Using the Cauchy-Schwarz inequality and standard approximation theory yields
\begin{equation*}
	\begin{aligned}
		\uppercase\expandafter{\romannumeral1}\leq& \frac{1}{2}\int_{0}^{T}\left \|\frac{\partial e^{p}_{u}}{\partial t}\right \|^{2}_{L^{2}(\Omega_{h})}dt+\frac{1}{2}\int_{0}^{T}\left \|\mathcal{P}^{+}e_{u}\right \|^{2}_{L^{2}(\Omega_{h})}dt\\
		\leq &Ch^{2k+2}+\frac{1}{2}\int_{0}^{T}\left \|\mathcal{P}^{+}e_{u}\right \|^{2}_{L^{2}(\Omega_{h})}dt.
	\end{aligned}
\end{equation*}
From \eqref{equorth}, \eqref{equproPp}, and \eqref{equproQm}, we have $\uppercase\expandafter{\romannumeral2}=0$. As for $\uppercase\expandafter{\romannumeral3}$, taking $\mathbf{v}$ as the approximation of $\mathcal{Q}\mathbf{e}_{\mathbf{p}}$ in $(V_{h,k-1})^{2}$ and using \eqref{equproQm}, we have
\begin{equation*}
	\begin{aligned}
		\uppercase\expandafter{\romannumeral3}\leq& C \left| \int_{0}^{T}(\mathbf{e}^{p}_{\mathbf{q}},\mathcal{Q}\mathbf{e}_{\mathbf{p}}-\mathbf{v})_{\Omega_{h}}dt \right| \\
		\leq& Ch^{2k+1}.
	\end{aligned}
\end{equation*}
 By trace inequality and the Cauchy-Schwarz inequality, there holds

\begin{equation*}
	\begin{aligned}
		\uppercase\expandafter{\romannumeral4}=&-\int_{0}^{T}(\mathbf{n}\cdot\hat{\mathbf{e}}_{\mathbf{q}}^{p},\mathcal{P}^{+}e_{u})_{\Gamma_{\mathbb{B}}}dt\\
		=&-\int_{0}^{T}\left((\mathbf{n}\cdot(\mathcal{Q}^{-}\mathbf{q}-\mathbf{q}),\mathcal{P}^{+}e_{u})_{\Gamma_{\mathbb{B}}^{+}}-\left(\frac{\vartheta e^{p}_{u}}{h},\mathcal{P}^{+}e_{u}\right)_{\Gamma_{\mathbb{B}}^{+}}\right )dt\\
		\leq&Ch^{2k+2}+C\int_{0}^{T}\frac{\mathcal\|\mathcal{P}^{+}e_{u}\|^{2}_{L^{2}(\Gamma^{+}_{\mathbb{B}})}}{h}dt,
	\end{aligned}
\end{equation*}
where we have used the fact $(\frac{\vartheta e^{p}_{u}}{h},\mathcal{P}^{+}e_{u})_{\Gamma_{\mathbb{B}}^{+}}=0$.
As for $\uppercase\expandafter{\romannumeral5}$, Young's inequality and the approximation theory imply that, for $\epsilon>0$,
\begin{equation*}
	\begin{aligned}
		\uppercase\expandafter{\romannumeral5}=&\int_{0}^{T}((-\Delta)^{s-1}\mathbf{e}^{p}_{\mathbf{p}},\mathcal{Q}\mathbf{e}_{\mathbf{p}})_{\Omega_{h}}dt\\
		\leq &C\epsilon^{-1}\int_{0}^{T}((-\Delta)^{s-1}\mathbf{e}^{p}_{\mathbf{p}},\mathbf{e}^{p}_{\mathbf{p}})_{\Omega_{h}}+C\epsilon\int_{0}^{T}((-\Delta)^{s-1}\mathcal{Q}\mathbf{e}_{\mathbf{p}},\mathcal{Q}\mathbf{e}_{\mathbf{p}})_{\Omega_{h}}dt\\
		\leq &C\epsilon^{-1}h^{2k+2}+C\epsilon\int_{0}^{T}((-\Delta)^{s-1}\mathcal{Q}\mathbf{e}_{\mathbf{p}},\mathcal{Q}\mathbf{e}_{\mathbf{p}})_{\Omega_{h}}dt.
	\end{aligned}
\end{equation*}
Similar to the proof of Theorem \ref{thmstab1}, we have
\begin{equation*}
\begin{aligned}
		\mathbf{B}(\boldsymbol{\mathcal{R}}\mathbf{e};\boldsymbol{\mathcal{R}}\mathbf{e})=&\frac{1}{2}(\|\mathcal{P}^{+}e_{u}(T)\|^{2}_{L^{2}(\Omega_{h})}-\|\mathcal{P}^{+}e_{u}(0)\|^{2}_{L^{2}(\Omega_{h})})\\
		&+\int_{0}^{T}((-\Delta)^{s-1}\mathcal{Q}\mathbf{e}_{\mathbf{p}},\mathcal{Q}\mathbf{e}_{\mathbf{p}})_{\Omega_{h}}+\frac{\mathcal\|\mathcal{P}^{+}e_{u}\|^{2}_{L^{2}(\Gamma^{+}_{\mathbb{B}})}}{h}dt.
\end{aligned}
\end{equation*}
Thus by the Gr\"{o}nwall inequality, one has
\begin{equation*}
	\|\mathcal{P}^{+}e_{u}(T)\|^{2}_{L^{2}(\Omega_{h})}+\int_{0}^{T}((-\Delta)^{s-1}\mathcal{Q}\mathbf{e}_{\mathbf{p}},\mathcal{Q}\mathbf{e}_{\mathbf{p}})_{\Omega_{h}}+\frac{\mathcal\|\mathcal{P}^{+}e_{u}\|^{2}_{L^{2}(\Gamma^{+}_{\mathbb{B}})}}{h}dt\leq Ch^{2k+1}.
\end{equation*}
Combining the projection property, the desired result has been obtained.
\end{proof}
Similarly, we have
\begin{theorem}\label{thmer2}
	Let $u$ and $u_{h}$ be the solutions of \eqref{equationvar} and \eqref{equationdis} with flux \eqref{eqflux2i}. Then
	\begin{equation*}
		\|u-u_{h}\|_{L^{2}(\Omega_{h})}\leq Ch^{k+\frac{1}{2}},
	\end{equation*}
where $k\geq 1$.
\end{theorem}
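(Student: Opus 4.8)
The plan is to follow the template set by the proof of Theorem~\ref{thmer1}, transferring the argument from the flux \eqref{eqflux1i} to the alternative flux \eqref{eqflux2i}. The structural identity $\mathbf{B}(\mathbf{e};\boldsymbol{\psi}_h)=0$ remains valid, since it comes only from subtracting \eqref{equationdis} from the variational form \eqref{equationvar} and does not depend on which admissible flux we chose. The key change is in the choice of the projection operator: where Theorem~\ref{thmer1} used $\boldsymbol{\mathcal{R}}=\{\mathcal{P}^{+},\mathcal{Q},\mathcal{Q}^{-}\}$, I would now take $\boldsymbol{\mathcal{R}}=\{\mathcal{P}^{-},\mathcal{Q},\mathcal{Q}^{+}\}$, matching the upwinding direction of \eqref{eqflux2i} (namely $\hat u_h=u_h^{-}$ and $\hat{\mathbf{q}}_h=\mathbf{q}_h^{+}$). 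With this pairing, the decomposition $\mathbf{B}(\boldsymbol{\mathcal{R}}\mathbf{e};\boldsymbol{\mathcal{R}}\mathbf{e})=\mathbf{B}(\mathbf{e}^{p};\boldsymbol{\mathcal{R}}\mathbf{e})$ still holds and the right-hand side splits into the same five pieces $\mathrm{I}$ through $\mathrm{V}$, now written with $\mathcal{P}^{-}$ and $\mathcal{Q}^{+}$ in place of $\mathcal{P}^{+}$ and $\mathcal{Q}^{-}$.

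The estimates for the individual terms then go through essentially verbatim. Term $\mathrm{I}$ is bounded by the Cauchy--Schwarz inequality and the approximation property of $\mathcal{P}^{-}$, giving $Ch^{2k+2}$ plus an $L^2(\Omega_h)$ term that Gr\"onwall will absorb. Term $\mathrm{II}$ vanishes by the orthogonality relations \eqref{equorth}, \eqref{equproPm}, and \eqref{equproQp}, which are the exact analogues of the relations used for the $\mathcal{P}^{+}$/$\mathcal{Q}^{-}$ pair. Terms $\mathrm{III}$ and $\mathrm{V}$ are interior-volume terms not involving the fluxes, so they are unchanged and yield $Ch^{2k+1}$ and (via Young's inequality) $C\epsilon^{-1}h^{2k+2}+C\epsilon\int_0^T((-\Delta)^{s-1}\mathcal{Q}\mathbf{e}_{\mathbf{p}},\mathcal{Q}\mathbf{e}_{\mathbf{p}})_{\Omega_h}dt$ respectively, with the latter $\epsilon$-term to be absorbed on the left. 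Term $\mathrm{IV}$ again collapses to an integral over $\Gamma_{\mathbb{B}}$ only (all internal-edge contributions cancel by the edge-orthogonality of $\mathcal{P}^{-}$ and $\mathcal{Q}^{+}$), but now the boundary subspace is $\Gamma_{\mathbb{B}}^{-}$ rather than $\Gamma_{\mathbb{B}}^{+}$, because the penalty term $g^{+}(u)$ in \eqref{eqflux2i} is supported there; the trace and Cauchy--Schwarz inequalities then give $Ch^{2k+2}+C\int_0^T h^{-1}\|\mathcal{P}^{-}e_u\|^2_{L^2(\Gamma^{-}_{\mathbb{B}})}dt$.

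Assembling these, and using the analogue of the stability computation in Theorem~\ref{thmstab2} (which provides the energy identity with the $\Gamma^{-}_{\mathbb{B}}$ penalty term and the nonnegativity of $((-\Delta)^{s-1}\mathcal{Q}\mathbf{e}_{\mathbf{p}},\mathcal{Q}\mathbf{e}_{\mathbf{p}})_{\Omega_h}$ by Parseval), one gets
\begin{equation*}
	\mathbf{B}(\boldsymbol{\mathcal{R}}\mathbf{e};\boldsymbol{\mathcal{R}}\mathbf{e})=\tfrac{1}{2}\big(\|\mathcal{P}^{-}e_u(T)\|^2_{L^2(\Omega_h)}-\|\mathcal{P}^{-}e_u(0)\|^2_{L^2(\Omega_h)}\big)+\int_0^T((-\Delta)^{s-1}\mathcal{Q}\mathbf{e}_{\mathbf{p}},\mathcal{Q}\mathbf{e}_{\mathbf{p}})_{\Omega_h}+h^{-1}\|\mathcal{P}^{-}e_u\|^2_{L^2(\Gamma^{-}_{\mathbb{B}})}\,dt.
\end{equation*}
Choosing $\epsilon$ small, moving the absorbable terms to the left, noting $\|\mathcal{P}^{-}e_u(0)\|_{L^2(\Omega_h)}=\mathcal{O}(h^{k+1})$ from the initial projection, and applying Gr\"onwall's inequality yields $\|\mathcal{P}^{-}e_u(T)\|^2_{L^2(\Omega_h)}\le Ch^{2k+1}$. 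A triangle inequality with the approximation estimate $\|\mathcal{P}^{-}u-u\|_{L^2(\Omega_h)}\le Ch^{k+1}$ then gives the claimed bound. I do not expect any genuine obstacle: the only point requiring care is the bookkeeping of which projection ($\mathcal{P}^{\pm}$, $\mathcal{Q}^{\pm}$) is paired with which flux sign so that all interior edge terms cancel and the surviving boundary term has the correct sign to be controlled by the penalty; this is exactly the mirror image of the argument already carried out for Theorem~\ref{thmer1}.
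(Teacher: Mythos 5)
Your proposal is correct and is exactly what the paper intends: the paper omits the proof of Theorem \ref{thmer2}, stating only ``Similarly, we have,'' and your mirror-image argument --- replacing $\boldsymbol{\mathcal{R}}=\{\mathcal{P}^{+},\mathcal{Q},\mathcal{Q}^{-}\}$ by $\{\mathcal{P}^{-},\mathcal{Q},\mathcal{Q}^{+}\}$ and $\Gamma^{+}_{\mathbb{B}}$ by $\Gamma^{-}_{\mathbb{B}}$ to match the flux \eqref{eqflux2i} --- is precisely the intended adaptation of the proof of Theorem \ref{thmer1}, consistent with the energy identity of Theorem \ref{thmstab2}. The only caveat is a minor bookkeeping point inherited from the paper's own sign conventions for $g^{\pm}$ and $\Gamma^{\pm}_{\mathbb{B}}$, which does not affect the argument.
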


\begin{remark}
	The numerical scheme \eqref{equationdis} can also be applied to solve Eq. \eqref{equation2D} with $s\in(\frac{1}{2},1)$ in one dimension; and the corresponding stability and convergence analyses can be similarly got. As for $s\in(0,\frac{1}{2})$ in one dimension, since $((-\Delta)^{s}u,u)$ may be negative, it seems not easy to get the stability and convergence of numerical scheme \eqref{equationdis}.
\end{remark}

\section{Numerical experiments}
In this section, we present some numerical experiments to validate the above theoretical results. In the temporal direction, we use the backward Euler scheme and take the time step size $\tau$ small enough to ensure the temporal error negligible.
\begin{example}\label{exampl1}
	We take a ball centered at $(0,0)$ with radius $r=1$ as the domain $\Omega$, and the exact solution
	\begin{equation}\label{equrealsol}
		u(\mathbf{x},t)=e^{-t}(1-|\mathbf{x}|^{2})^{p};
	\end{equation}
the source term can be represented by \cite{Dyda.2012FcfpfaeotfL}
\begin{equation}\label{equsourcet}
	f(\mathbf{x},t)=e^{-t}\left (c_{2,s}\frac{\pi\Gamma(-s)\Gamma(p+1)}{\Gamma(p+1-s)}\!_{2}F_{1}(s+1,-p+s;1;|\mathbf{x}|^{2})-(1-|\mathbf{x}|^{2})^{p}\right ).
\end{equation}
Here, we take $p=6$, $T=1$, $\tau=T/20000$, and the degree $k=1,2$. The parameter in the scheme is taken as $\vartheta=5$. When we choose flux \eqref{eqflux1i}, the corresponding results are presented in Tables \ref{tab:p6k1-1} and \ref{tab:p6k2-1}; and when using \eqref{eqflux2i}, the results are given in Tables \ref{tab:p6k1-2} and \ref{tab:p6k2-2}. All the convergence rates are $\mathcal{O}(h^{k+1})$, higher than the predicted one in Theorem \ref{thmer2}.

\begin{table}[htbp]
	\caption{Errors and orders of convergence with $k=1$ and flux \eqref{eqflux1i}}
	\begin{tabular}{cccccc}
		\hline
			$s\backslash h$& 0.6 & 0.3 & 0.15 & 0.1 \\
		\hline
		0.4 & 8.139E-02 & 3.504E-02 & 8.094E-03 & 3.506E-03 \\
		& Rates & 1.2157 & 2.1142 & 2.0637 \\
		0.6 & 7.508E-02 & 2.896E-02 & 6.567E-03 & 2.786E-03 \\
		&Rates  & 1.3744 & 2.1408 & 2.1148 \\
		0.8 & 7.084E-02 & 2.610E-02 & 6.094E-03 & 2.629E-03 \\
		&Rates  & 1.4404 & 2.0987 & 2.0729 \\
		\hline
	\end{tabular}
	\label{tab:p6k1-1}
\end{table}
\begin{table}[htbp]
	\caption{Errors and orders of convergence with $k=2$ and flux \eqref{eqflux1i}}
	\begin{tabular}{cccccc}
		\hline
			$s\backslash h$& 0.6 & 0.3 & 0.15 & 0.1 \\
		\hline
		0.3 & 4.252E-02 & 5.432E-03 & 6.673E-04 & 2.093E-04 \\
		& Rates & 2.9688 & 3.0249 & 2.8592 \\
		0.5 & 3.582E-02 & 3.481E-03 & 3.554E-04 & 9.869E-05 \\
		& Rates & 3.3633 & 3.2918 & 3.1600 \\
		0.7 & 3.268E-02 & 2.946E-03 & 3.073E-04 & 8.585E-05 \\
		&Rates  & 3.4718 & 3.2608 & 3.1455 \\
		\hline
	\end{tabular}
	\label{tab:p6k2-1}
\end{table}
\begin{table}[htbp]
	\begin{center}
		\caption{Errors and orders of convergence with $k=1$ and flux \eqref{eqflux2i}}
		\begin{tabular}{cccccc}
			\hline
			$s\backslash h$ & 0.6 & 0.3&0.15 & 0.1 \\
			\hline
			0.4 & 1.064E-01 & 3.311E-02&8.188E-03 & 3.503E-03 \\
			& Rates & 1.6844&2.0157 & 2.0937 \\
			0.6 & 9.127E-02 & 2.827E-02&6.615E-03 & 2.785E-03 \\
			&Rates  & 1.6911&2.0953 & 2.1334 \\
			0.8 & 7.988E-02 & 2.573E-02&6.129E-03 & 2.629E-03 \\
			&Rates  & 1.6346&2.0697 & 2.0874 \\
			\hline
		\end{tabular}
		\label{tab:p6k1-2}
	\end{center}
\end{table}

\begin{table}[htbp]
	\begin{center}	
		\caption{Errors and orders of convergence with $k=2$ and flux \eqref{eqflux2i}}
		\begin{tabular}{ccccc}
			\hline
			$s\backslash h$& 0.6 & 0.3&0.15 & 0.1 \\
			\hline
			0.3 & 4.641E-02 & 5.136E-03&6.804E-04 & 2.084E-04 \\
			& Rates & 3.1759&2.9161 & 2.9179 \\
			0.5 & 3.872E-02 & 3.331E-03&3.609E-04 & 9.854E-05 \\
			& Rates & 3.5390&3.2062 & 3.2017 \\
			0.7 & 3.407E-02 & 2.889E-03&3.101E-04 & 8.603E-05 \\
			&Rates  & 3.5599&3.2196 & 3.1627 \\
			\hline
		\end{tabular}
		\label{tab:p6k2-2}
	\end{center}
\end{table}

\end{example}
\begin{example}\label{examp22}
	We choose  the same domain $\Omega$ as the one in Example \ref{exampl1}. We take \eqref{equrealsol} and \eqref{equsourcet} with $p=0$, respectively, as the exact solution and source term; from \cite{Acosta.2019Feaffep,Nie.2020NaftstfFPswtis}, it is known that the solution $u\in H^{s+\frac{1}{2}-\epsilon}(\mathbb{R}^{2})$ with $\epsilon>0$ arbitrary small.  Here, we take $T=1$, $\tau=T/20000$, $\vartheta=5$, and $k=1,2$ with flux \eqref{eqflux2i}. The numerical results are shown in Tables \ref{tab:p0k1} and \ref{tab:p0k2}. It can be noted that the errors for $k=2$ are less than the ones for $k=1$, even though the convergence rates are both about $\mathcal{O}(h^{s+\frac{1}{2}})$.
	
	\begin{table}[htbp]
		\begin{center}
				\caption{Errors and orders of convergence in solving Example \ref{examp22} with $k=1$}
			\begin{tabular}{cccccc}
				\hline
				$s\backslash h$& 0.6 & 0.3&0.15 & 0.1 \\
				\hline
				0.3 & 1.725E-01 & 8.510E-02&5.368E-02 & 3.777E-02 \\
				&Rates  & 1.0190&0.6647 & 0.8671 \\
				0.5 & 1.165E-01 & 4.752E-02&2.573E-02 & 1.652E-02 \\
				&Rates  & 1.2944&0.8852 & 1.0919 \\
				0.7 & 7.993E-02 & 2.520E-02&1.130E-02 & 6.475E-03 \\
				& Rates & 1.6654&1.1572 & 1.3730 \\
				\hline
			\end{tabular}
			\label{tab:p0k1}
		\end{center}
	\end{table}
	\begin{table}[htbp]
		\begin{center}
		\caption{Errors and orders of convergence in solving Example \ref{examp22} with $k=2$}
		\begin{tabular}{cccccc}
			\hline
			$s\backslash h$& 0.6 & 0.3 & 0.15 & 0.1 \\
			\hline
			0.3 & 1.204E-01 & 4.407E-02 & 2.167E-02 & 1.453E-02 \\
			&Rates  & 1.4498 & 1.0240 & 0.9867 \\
			0.5 & 1.089E-01 & 2.742E-02 & 1.002E-02 & 6.122E-03 \\
			& Rates & 1.9896 & 1.4519 & 1.2157 \\
			0.7 & 1.011E-01 & 1.868E-02 & 4.882E-03 & 2.625E-03 \\
			&  Rates& 2.4362 & 1.9360 & 1.5303 \\
			\hline
		\end{tabular}
		\label{tab:p0k2}
	\end{center}
	\end{table}
	
\end{example}
\section{Conclusions}

We propose the LDG framework for the integral fractional Laplacian, which has wide interests in pure and applied mathematical community, and a lot of physical and engineering applications. The complete stability and convergence analyses are provided. The numerical experiments are performed with convergence rates $\mathcal{O}(h^{k+1})$,  better the theoretically predicted ones $\mathcal{O}(h^{k+1/2})$, where $k$ is the degree of the polynomial.

%In this paper, we propose a LDG scheme to solve fractional diffusion equation with integral fractional Laplacian. Stability and error analysis are also provided and numerical experiments show that the convergence rates of our scheme are $\mathcal{O}(h^{k+1})$, where $k$ is the degree of the polynomial.

%%-----------------------------
%%      your bibliography
%%-----------------------------
%\bibliographystyle{esaim}
%\bibliography{ref}

\begin{thebibliography}{10}
	
	\bibitem{Acosta.2017AsFifa2hDpoafL}
	G.~Acosta, F.~M. Bersetche and J.~P. Borthagaray.
	\newblock A short {FE} implementation for a 2d homogeneous {Dirichlet} problem
	of a fractional {Laplacian}.
	\newblock Comput. Math. Appl. \textbf{74} (2017) 784--816.
	
	\bibitem{Acosta.2019Feaffep}
	G.~Acosta, F.~M. Bersetche and J.~P. Borthagaray.
	\newblock Finite element approximations for fractional evolution problems.
	\newblock Fract. Calc. Appl. Anal. \textbf{22} (2019) 767--794.
	
	\bibitem{Acosta.2017AFLERoSaFEA}
	G.~Acosta and J.~P. Borthagaray.
	\newblock A fractional {Laplace} equation: Regularity of solutions and finite
	element approximations.
	\newblock SIAM J. Numer. Anal. \textbf{55} (2017) 472--495.
	
	\bibitem{Acosta.2017Rtahonmft1fL}
	G.~Acosta, J.~P. Borthagaray, O.~Bruno and M.~Maas.
	\newblock Regularity theory and high order numerical methods for the
	(1d)-fractional {Laplacian}.
	\newblock Math. Comp. \textbf{87} (2017) 1821--1857.
	
	\bibitem{Acosta.2018FeaotnfDpb}
	G.~Acosta, J.~P. Borthagaray and N.~Heuer.
	\newblock Finite element approximations of the nonhomogeneous fractional
	{Dirichlet} problem.
	\newblock IMA J. Numer. Anal. \textbf{39} (2018) 1471--1501.
	
	\bibitem{Bonito.2019NaotifL}
	A.~Bonito, W.~Lei and J.~E. Pasciak.
	\newblock Numerical approximation of the integral fractional {Laplacian}.
	\newblock Numer. Math. \textbf{142} (2019) 235--278.
	
	\bibitem{Castillo.2001OapeefthvotldGmfcdp}
	P.~Castillo, B.~Cockburn, D.~Sch{\"o}tzau and C.~Schwab.
	\newblock Optimal a priori error estimates for the $hp$-version of the local
	discontinuous {Galerkin} method for convection-diffusion problems.
	\newblock Math. Comp. \textbf{71} (2001) 455--479.
	
	\bibitem{Cockburn.2007AAotMDLDGMfCDP}
	B.~Cockburn and B.~Dong.
	\newblock An analysis of the minimal dissipation local discontinuous {Galerkin}
	method for convection-diffusion problems.
	\newblock J. Sci. Comput. \textbf{32} (2007) 233--262.
	
	\bibitem{Cockburn.2001SotLDGMfEPoCG}
	B.~Cockburn, G.~Kanschat, I.~Perugia and D.~Sch{\"o}tzau.
	\newblock Superconvergence of the local discontinuous {Galerkin} method for
	elliptic problems on cartesian grids.
	\newblock SIAM J. Numer. Anal. \textbf{39} (2001) 264--285.
	
	\bibitem{Cockburn.1998TLDGMfTDCDS}
	B.~Cockburn and C.-W. Shu.
	\newblock The local discontinuous {Galerkin} method for time-dependent
	convection-diffusion systems.
	\newblock SIAM J. Numer. Anal. \textbf{35} (1998) 2440--2463.
	
	\bibitem{Deng.2018BPftFaTFO}
	W.~Deng, B.~Li, W.~Tian and P.~Zhang.
	\newblock Boundary problems for the fractional and tempered fractional
	operators.
	\newblock Multiscale Model. Simul. \textbf{16} (2018) 125--149.
	
	\bibitem{Deng.2013LDGmffde}
	W.~H. Deng and J.~S. Hesthaven.
	\newblock {Local} discontinuous {Galerkin} methods for fractional diffusion
	equations.
	\newblock M2AN Math. Model. Numer. Anal. \textbf{47} (2013) 1845--1864.
	
	\bibitem{Dong.2009AoaLDGMfLTDFOP}
	B.~Dong and C.-W. Shu.
	\newblock Analysis of a local discontinuous {Galerkin} method for linear
	time-dependent fourth-order problems.
	\newblock SIAM J. Numer. Anal. \textbf{47} (2009) 3240--3268.
	
	\bibitem{Duo.2018AnaafdmftfLatfPp}
	S.~Duo, H.~W. {van Wyk} and Y.~Zhang.
	\newblock A novel and accurate finite difference method for the fractional
	{Laplacian} and the fractional {Poisson} problem.
	\newblock J. Comput. Phys. \textbf{355} (2018) 233--252.
	
	\bibitem{Duo.2019AnmftatdifLwa}
	S.~Duo and Y.~Zhang.
	\newblock Accurate numerical methods for two and three dimensional integral
	fractional {Laplacian} with applications.
	\newblock Comput. Methods Appl. Mech. Engrg. \textbf{355} (2019) 639--662.
	
	\bibitem{Dyda.2012FcfpfaeotfL}
	B.~Dyda.
	\newblock Fractional calculus for power functions and eigenvalues of the
	fractional {Laplacian}.
	\newblock Fract. Calc. Appl. Anal. \textbf{15} (2012) 536--555.
	
	\bibitem{Huang.2014NMftFLAFDQA}
	Y.~Huang and A.~Oberman.
	\newblock Numerical methods for the fractional {Laplacian}: A finite
	difference-quadrature approach.
	\newblock SIAM J. Numer. Anal. \textbf{52} (2014) 3056--3084.
	
	\bibitem{Liu.2019SoECDGMfLHE}
	Y.~Liu, C.-W. Shu and M.~Zhang.
	\newblock Superconvergence of energy-conserving discontinuous {Galerkin}
	methods for linear hyperbolic equations.
	\newblock Commun. Appl. Math. Comput. \textbf{1} (2019) 101--116.
	
	\bibitem{Nie.2020NaftstfFPswtis}
	D.~Nie, J.~Sun and W.~Deng.
	\newblock Numerical algorithm for the space-time fractional {Fokker--Planck}
	system with two internal states.
	\newblock Numer. Math. \textbf{146} (2020) 481--511.
	
	\bibitem{Qiu.2015NdGmffdeo2dwtm}
	L.~Qiu, W.~Deng and J.~S. Hesthaven.
	\newblock Nodal discontinuous {Galerkin} methods for fractional diffusion
	equations on 2d domain with triangular meshes.
	\newblock J. Comput. Phys. \textbf{298} (2015) 678--694.
	
	\bibitem{Sauter.2011BEM}
	S.~A. Sauter and C.~Schwab.
	\newblock Boundary element methods.
	\newblock In S.~A. Sauter and C.~Schwab, editors, Boundary Element Methods,
	\emph{Springer Series in Computational Mathematics}, volume~39. {Springer
		Berlin Heidelberg}, Berlin, Heidelberg (2011) 183--287.
	
	\bibitem{Vazquez.2012NDwFLO}
	J.~L. V{\'a}zquez.
	\newblock Nonlinear diffusion with fractional {Laplacian} operators.
	\newblock In H.~Holden and K.~H. Karlsen, editors, Nonlinear Partial
	Differential Equations, \emph{Abel Symposia}, volume~7. {Springer Berlin
		Heidelberg}, Berlin, Heidelberg (2012) 271--298.
	
	\bibitem{Wang.2020LdGmweintdfsndp}
	H.~Wang, Q.~Zhang, S.~Wang and C.-W. Shu.
	\newblock Local discontinuous {Galerkin} methods with explicit-implicit-null
	time discretizations for solving nonlinear diffusion problems.
	\newblock Sci. China Math. \textbf{63} (2020) 183--204.
	
	\bibitem{Xu.2014DGMfFCDE}
	Q.~Xu and J.~S. Hesthaven.
	\newblock Discontinuous {Galerkin} method for fractional convection-diffusion
	equations.
	\newblock SIAM J. Numer. Anal. \textbf{52} (2014) 405--423.
	
	\bibitem{Yeganeh.2017SdsdiatfdeualdGm}
	S.~Yeganeh, R.~Mokhtari and J.~S. Hesthaven.
	\newblock Space-dependent source determination in a time-fractional diffusion
	equation using a local discontinuous {Galerkin} method.
	\newblock BIT \textbf{57} (2017) 685--707.
	
	\bibitem{Zhang.2018ARBGMftTFL}
	Z.~Zhang, W.~Deng and G.~E. Karniadakis.
	\newblock A {Riesz} basis {Galerkin} method for the tempered fractional
	{Laplacian}.
	\newblock SIAM J. Numer. Anal. \textbf{56} (2018) 3010--3039.
	
\end{thebibliography}

\end{document}